\newtheorem{thm}{Theorem}[section]
\newtheorem{theorem}[thm]{Theorem}
\newtheorem{proposition}[thm]{Proposition}
\theoremstyle{definition}
\theoremstyle{remark}
\title{Characterizing Hilbert modular cusp forms by coefficient size}
\author {Benjamin Linowitz}
\address{Department of Mathematics\\ 
6188 Kemeny Hall\\
Dartmouth College\\
Hanover, NH 03755}
\email[] {benjamin.linowitz@dartmouth.edu}
\urladdr{http://www.math.dartmouth.edu/~linowitz/ }
\newcommand{\scrM}{\mathscr{M}}
\newcommand{\scrE}{\mathscr{E}}
\newcommand{\scrS}{\mathscr{S}}
\newcommand{\Q}{\mathbb Q}
\newcommand{\C}{\mathbb C}
\newcommand{\Z}{\mathbb{Z}}
\newcommand{\T}{\mathbb{T}}
\newcommand{\calN}{\mathcal{N}}
\newcommand{\calM}{\mathcal{M}}
\newcommand{\calO}{\mathcal O}
\DeclareMathOperator*{\dete}{det}
\newcommand{\frakm}{{\mathfrak m}}
\newcommand{\frakd}{{\mathfrak d}}
\newcommand{\frakf}{{\mathfrak f}}
\newcommand{\frakp}{{\mathfrak p}}
\newcommand{\frakP}{{\mathfrak P}}
\newcommand{\frakq}{{\mathfrak q}}
\newcommand{\fraka}{{\mathfrak a}}
\newcommand{\frakb}{{\mathfrak b}}
\newcommand{\frakc}{{\mathfrak c}}
\begin{document}
\subjclass[2010] {Primary 11F41; Secondary 11F11}
\keywords{Hilbert modular form, Fourier coefficient, Eisenstein series}

\begin{abstract}
Associated to an (adelic) Hilbert modular form is a sequence of `Fourier coefficients' which uniquely determine the form. In this paper we characterize Hilbert modular cusp forms by the size of their Fourier coefficients. This answers in the affirmative a question posed by Winfried Kohnen.
\end{abstract}

\maketitle

\section{Introduction}\label{section:intro}

In \cite{Kohnen-OnCertainGMFs,Schmoll} it is shown that if the Fourier coefficients $a(n)$ ($n\geq 1$) of an elliptic modular form $f$ of even integral weight $k\geq 2$ and level $N$ satisfy the Deligne bound $a(n)\ll_{f,\epsilon} n^{(k-1)/2+\epsilon}$ ($\epsilon>0$), then $f$ must be a cusp form. An analogous result was recently proven by Kohnen and Martin \cite{Kohnen-Martin} for Siegel modular forms of weight $k$ and genus $2$ on the full Siegel modular group. Kohnen posed \cite{Kohnen-question} the question of whether one could prove analogous results in more general settings, in particular the Hilbert modular setting. This paper answers Kohnen's question in the affirmative by characterizing Hilbert modular cusp forms in terms of the size of their Fourier coefficients.

In seeking to generalize the results of \cite{Kohnen-OnCertainGMFs,Schmoll} to Hilbert modular forms one immediately confronts a number of difficulties: the lack of Fourier expansions and the absence of an action of Hecke operators under which the space of Hilbert modular forms is invariant. Let $K$ be a totally real number field and consider the complex vector space $M_k(\calN)$ of classical Hilbert modular forms of weight $k$ and level $\calN$ over $K$. If the strict class number $h^+$ of $K$ is greater than $1$, then a form $f\in M_k(\calN)$ need not possess a Fourier expansion and hence Fourier coefficients to examine. In order to circumvent this difficulty we work with the larger space $\scrM_k(\calN)$ of adelic Hilbert modular forms of weight $k$ and level $\calN$. The elements of $\scrM_k(\calN)$ are $h^+$-tuples of classical Hilbert modular forms and to each form $f\in \scrM_k(\calN)$ one associates (as in \cite{shimura-duke}) a sequence $\{ C(\frakm, f) : \frakm \mbox{ an integral ideal of } K\}$ of `Fourier coefficients' which uniquely determine $f$. Equally important to our analysis is the fact that unlike $M_k(\calN)$, the space of adelic Hilbert modular forms $\scrM_k(\calN)$ is mapped to itself under the action of the Hecke operators $T_\frakp$.

\begin{theorem}\label{theorem:intro} Let notation be as above with $k\geq 3$ and fix a real number $\epsilon\in \left(0, \frac{5k-7}{10}\right]$. If $f\in \scrM_k(\calN)$ satisfies $$|C(\frakm,f)|\ll_{f,\epsilon} N(\frakm)^{k-1-\epsilon }$$ then $f$ is a cusp form.
\end{theorem}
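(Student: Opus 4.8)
The plan is to split $f$ into cuspidal and Eisenstein parts and to use the assumed upper bound on $|C(\frakm,f)|$ to force the Eisenstein part to vanish. Write $\scrM_k(\calN)=\scrS_k(\calN)\oplus\scrE_k(\calN)$ and correspondingly $f=g+E$, with $g$ a cusp form and $E$ Eisenstein; the whole content is then to show $E=0$. Here I would lean on the Hecke-stability of $\scrM_k(\calN)$ emphasised in the introduction: since the $T_\frakp$ preserve $\scrM_k(\calN)$, I may decompose $g$ and $E$ into simultaneous Hecke eigenforms, whose Fourier coefficients $C(\frakm,\cdot)$ are governed by their eigenvalues $\lambda_\frakp$. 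The problem then reduces to comparing the sizes of the eigenvalue systems of cuspidal eigenforms against those of Eisenstein eigenforms.

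The cuspidal part is the easy side. I would invoke the known estimates toward the Ramanujan--Petersson conjecture for Hilbert modular cusp forms to obtain a bound of the shape $|C(\frakm,g)|\ll_{g,\epsilon'}N(\frakm)^{(k-1)/2+1/5+\epsilon'}$. It is exactly this exponent that pins down the admissible range of $\epsilon$: one has $k-1-\epsilon>\tfrac{k-1}{2}+\tfrac15$ if and only if $\epsilon<\frac{5k-7}{10}$, so throughout the stated interval the cuspidal coefficients are genuinely smaller than the hypothesised bound, and in particular cannot on their own produce coefficients of the size the Eisenstein part will be shown to have.

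The heart of the argument is the reverse estimate for $E$. Using the explicit Fourier expansion of the Hilbert Eisenstein series attached to a pair of Hecke characters $(\psi,\phi)$, the coefficient $C(\frakm,E_{\psi,\phi})$ is a twisted divisor sum whose dominant term is $\phi(\frakm)N(\frakm)^{k-1}$; equivalently each Eisenstein eigenform has $T_\frakp$-eigenvalue of size $\asymp N(\frakp)^{k-1}$. I would then prove that if $E\neq0$, then $|C(\frakm,E)|\gg N(\frakm)^{k-1}$ as $\frakm=\frakp$ ranges over a set of primes of positive density. Combining the two estimates along this set yields $|C(\frakp,f)|\ge|C(\frakp,E)|-|C(\frakp,g)|\gg N(\frakp)^{k-1}$, which contradicts $|C(\frakm,f)|\ll N(\frakm)^{k-1-\epsilon}$; hence $E=0$ and $f=g$ is a cusp form.

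I expect the Eisenstein lower bound to be the main obstacle. The danger is cancellation: $E=\sum_j c_j E_{\psi_j,\phi_j}$, and the leading terms $c_j\phi_j(\frakm)N(\frakm)^{k-1}$ could in principle cancel for every $\frakm$, most delicately among Eisenstein series sharing a common character $\phi$. To exclude this I would use the linear independence of distinct Hecke characters, together with the fact that distinct Eisenstein eigenforms have distinct eigenvalue systems, so that $\frakp\mapsto N(\frakp)^{-(k-1)}\sum_j c_j\lambda_\frakp(E_{\psi_j,\phi_j})$ is a non-trivial character combination and is therefore bounded away from $0$ on a positive-density set of primes by an orthogonality/equidistribution argument. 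Making this non-vanishing quantitative, while carefully tracking the character twists so as to recover the clean exponent, is the technical crux. The hypothesis $k\geq3$ enters here to guarantee that the Eisenstein series are holomorphic and that their coefficients are honestly of order $N(\frakm)^{k-1}$, with none of the non-holomorphic correction terms that appear in weight two.
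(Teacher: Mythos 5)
Your opening moves coincide with the paper's: the splitting $f=g+E$, Shahidi's bound $|C(\frakm,g)|\ll N(\frakm)^{(k-1)/2+1/5}$ for the cuspidal part, and the bookkeeping that produces the exponent $\frac{5k-7}{10}$ are all exactly as in the paper. The gap is in the heart of your argument: the claim that $E\neq 0$ forces $|C(\frakp,E)|\gg N(\frakp)^{k-1}$ on a positive-density set of primes is false. The space $\scrE_k(\calN)$ is spanned not by eigenforms with pairwise distinct eigenvalue systems but by \emph{shifts} $E_{\psi_1,\psi_2}\mid B_\fraka$ of newforms of level $\calM\mid\calN$, with $\fraka\mid\calN\calM^{-1}$, and $C(\frakm,E_{\psi_1,\psi_2}\mid B_\fraka)=C(\frakm\fraka^{-1},E_{\psi_1,\psi_2})$. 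So if $\frakq$ is a prime with $\frakq\calM\mid\calN$, the nonzero form $E=E_{\psi_1,\psi_2}\mid B_\frakq$ has $C(\frakp,E)=0$ for every prime $\frakp\neq\frakq$ and $C(\frakq,E)=1$: all of its prime coefficients lie in $\{0,1\}$. Such a form violates the hypothesis \eqref{equation:bound} only at \emph{composite} ideals (e.g.\ at $\frakm=\frakq\frakP$ one has $C(\frakm,E)=C(\frakP,E_{\psi_1,\psi_2})\asymp N(\frakP)^{k-1}$), so no argument that inspects only prime coefficients can prove the theorem. Your proposed repair via linear independence of eigenvalue systems cannot see these forms either: $E_{\psi_1,\psi_2}\mid B_\fraka$ and $E_{\psi_1,\psi_2}$ share the same $T_\frakp$-eigenvalues for all $\frakp\nmid\calN$ (the strong multiplicity one statement, Theorem~\ref{theorem:eigenvalues}, separates newforms only), so the ``nontrivial character combination'' you want to exploit is degenerate within each newform class, and any eigenbasis for the full Hecke algebra consists of combinations of shifts exhibiting the same vanishing of prime coefficients. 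Your method thus proves at best that the newform components of $E$ vanish, and says nothing about the old components.

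What is needed, and what the paper does, is a device for probing the old components. After using operators of $\T^\calN$ (which preserve \eqref{equation:bound} by \eqref{equation:heckeformula}) to project onto the span of the shifts of a single newform $E_{\psi_1,\psi_2}$ of level $\calM$, the paper writes $f=\sum_{\fraka\mid\calN\calM^{-1}}\lambda_\fraka E_{\psi_1,\psi_2}\mid B_\fraka$ and inducts on the number $r$ of prime factors of $\fraka$: choosing $\frakP$ a prime in the trivial strict ideal class modulo $\calN$ (infinitely many exist by Chebotarev, and for them $\psi_1(\frakP)=\psi_2(\frakP)=1$) and evaluating the coefficient at the composite ideal $\frakm=\fraka\frakP$, every term $\frakb\neq\fraka$ with at least $r$ prime factors contributes $C(\frakm\frakb^{-1},E_{\psi_1,\psi_2})=0$, the terms with fewer prime factors vanish by the induction hypothesis, and what survives is $|\lambda_\fraka|\,(1+N(\frakP)^{k-1})\ll_{\fraka} N(\frakP)^{k-1-\epsilon}$, forcing $\lambda_\fraka=0$ as $N(\frakP)\to\infty$. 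Your cuspidal estimate and the range of $\epsilon$ are fine; the missing idea is this induction at ideals $\fraka\frakP$ rather than at primes.
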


The proof of Theorem \ref{theorem:intro} proceeds along the same lines as those of \cite{Kohnen-OnCertainGMFs,Schmoll}, though is necessarily more complicated due to the technical nature of adelic Hilbert Eisenstein series. Crucial in the proof will be the structure of spaces of Hilbert Eisenstein series and in particular the newform theory of these spaces. This theory was proven in the elliptic case by Weisinger \cite{weisinger-thesis} and was later extended to the Hilbert modular setting by Wiles \cite{wiles} and Atwill and Linowitz \cite{Linowitz-Atwill}.

\section{Notation}\label{section:notation}

We employ the notation of \cite{shimura-duke, Linowitz-Atwill}. However, to make this paper somewhat self-contained, we shall briefly review the basic definitions of the functions and operators which we shall study.

Let $K$ be a totally real number field of degree $n$ over $\Q$ with ring of integers $\calO$, group of units $\calO^\times$, and totally positive units $\calO^\times_+$. Let $\frakd$ be the different of $K$. If $\frakq$ is a finite prime of $K$, we denote by $K_{\frakq}$ the completion of $K$ at $\frakq$, $\calO_{\frakq}$ the valuation ring of $K_{\frakq}$, and $\pi_{\frakq}$ a local uniformizer. Finally, we let $\mathfrak P_\infty$ denote the product of all archimedean primes of $K$. 

We denote by $K_\mathbb A$ the ring of $K$-adeles and by $K_\mathbb A^\times$ the group of $K$-ideles. As usual we view $K$ as a subgroup of $K_\mathbb A$ via the diagonal embedding. If $\tilde{\alpha}\in K_\mathbb A^\times$, we let $\tilde{\alpha}_{\infty}$ denote the archimedean part of $\tilde{\alpha}$ and $\tilde{\alpha}_0$ the finite part of $\tilde{\alpha}$. If $\mathcal{J}$ is an integral ideal we let $\tilde{\alpha}_{\mathcal{J}}$ denote the $\mathcal{J}$-part of $\tilde{\alpha}$. 

For an integral ideal $\calN$ we define a numerical character $\psi$ modulo $\calN$ to be a homomorphism $\psi: (\calO/\calN)^\times \rightarrow \C^\times$ and denote the conductor of $\psi$ by $\mathfrak{f}_\psi$. A Hecke character is a continuous character on the idele class group: $\Psi: K_\mathbb A^\times/ K^\times \rightarrow \C^\times$. We denote the induced character on $K_\mathbb A^\times$ by $\Psi$ as well. We adopt the convention that $\psi$ denotes a numerical character and $\Psi$ denotes a Hecke character.

For a fractional ideal $\mathcal{I}$ and integral ideal $\calN$, define 
  $$\Gamma_0(\calN,\mathcal{I})=\left\{ A\in \left(
  \begin{array}{ c c }
    \calO& \mathcal{I}^{-1}\frakd^{-1} \\
      \calN\mathcal{I}\frakd & \calO
  \end{array} \right) : \dete A \in \calO^\times_+ \right\}.$$

Let $\theta : \calO^\times_+\rightarrow \C^\times$ be a character of finite order and note that there exists an element $m\in\mathbb{R}^n$ such that $\theta(a)=a^{im}$ for all totally positive units $a$. While such an $m$ is not unique, we shall fix one such $m$ for the remainder of this paper. 

Let $k=(k_1,...,k_n)\in \mathbb{Z}_+^n$ and $\psi$ be a numerical character modulo $\calN$. Following Shimura  \cite{shimura-duke}, we define $M_k(\Gamma_0(\calN,\mathcal{I}),\psi,\theta)$ to be the complex vector space of classical Hilbert modular forms on $\Gamma_0(\calN,\mathcal I)$. 

It is well-known that the space of classical Hilbert modular forms of a fixed weight, character and congruence subgroup is not invariant under the algebra generated by the Hecke operators $T_\frakp$. We therefore consider the larger space of adelic Hilbert modular forms, which \textit{is} invariant under the Hecke algebra. Our construction follows that of Shimura \cite{shimura-duke}.

Fix a set of strict ideal class representatives $\mathcal{I}_1,...,\mathcal{I}_h$ of $K$, set $\Gamma_{\lambda}=\Gamma_0(\calN,\mathcal{I}_{\lambda})$, and put $$\scrM_k(\calN,\psi,\theta)=\prod_{\lambda=1}^h M_k(\Gamma_{\lambda},\psi,\theta).$$ In the case that both $\psi$ and $\theta$ are trivial characters we will simplify our notation and denote $\scrM_k(\calN,\psi,\theta)$ by $\scrM_k(\calN)$.

Let $G_\mathbb A=GL_2(K_\mathbb A)$ and view $G_K=GL_2(K)$ as a subgroup of $G_\mathbb A$ via the diagonal embedding. Denote by $G_{\infty} = GL_2(\mathbb{R})^n$ the archimedean part of $G_\mathbb A$. For an integral ideal $\calN$ of $\calO$ and a prime $\frakp$, let 
$$Y_{\frakp}(\calN)=\left\{ A=\left(
  \begin{array}{ c c }
    a& b \\
      c&d
  \end{array} \right) \in  \left(
  \begin{array}{ c c }
    \calO_{\frakp}&\frakd^{-1}\calO_{\frakp} \\
      \calN\frakd\calO_{\frakp} & \calO_{\frakp}
  \end{array} \right) : \dete A\in K_{\frakp}^\times, (a\calO_{\frakp},\calN\calO_{\frakp})=1  \right\},$$
  
  $$W_{\frakp}(\calN)=\{ A\in Y_{\frakp}(\calN) : \dete A\in \calO^\times_{\frakp}  \}$$
  and put $$Y=Y(\calN)=G_\mathbb A\cap \left(G_{\infty +}\times \prod_{\frakp\nmid\mathfrak P_\infty} Y_{\frakp}(\calN)\right), \qquad W=W(\calN)=G_{\infty +}\times \prod_{\frakp \nmid\mathfrak P_\infty} W_{\frakp}(\calN).$$

Given a numerical character $\psi$ modulo $\calN$ define a homomorphism $\psi_Y: Y\rightarrow \C^\times$ by setting $\psi_Y(\left(
  \begin{array}{ c c }
    \tilde{a}& * \\
      *&*
  \end{array} \right))=\psi(\tilde{a}_{\calN}\mbox{ mod }\calN )$.
  
Given a fractional ideal $\mathcal I$ of $K$ define $\tilde{\mathcal{I}}=(\mathcal{I}_{\nu})_{\nu}$ to be a fixed idele such that $\mathcal{I}_{\infty}=1$ and $\tilde{\mathcal{I}}\calO=\mathcal{I}$. For $\lambda=1,...,h,$ set $x_{\lambda}=\left(
  \begin{array}{ c c }
1& 0 \\
      0&\tilde{I}_{\lambda}
  \end{array} \right)\in G_\mathbb A$. By the Strong Approximation theorem $$G_\mathbb A=\bigcup_{\lambda=1}^h G_K x_{\lambda} W=\bigcup_{\lambda=1}^h G_K x_{\lambda}^{-\iota} W,$$ where $\iota$ denotes the canonical involution on two-by-two matrices.
  
  For an $h$-tuple $(f_1,...,f_h)\in\scrM_k(\calN,\psi,\theta)$ we define a function $f: G_\mathbb A\rightarrow \C$ by
  
  $$f(\alpha x_{\lambda}^{-\iota}w)=\psi_Y(w^{\iota})\dete(w_{\infty})^{im}(f_{\lambda}\mid w_{\infty})(\textbf{i})$$
for $\alpha\in G_K$, $w\in W(\calN)$ and $\textbf{i}=(i,...,i)$ (with $i=\sqrt{-1}$). Here $$f_{\lambda}\mid \left(
  \begin{array}{ c c }
a& b \\
      c&d
  \end{array} \right)(\tau)=(ad-bc)^{\frac{k}{2}}(c\tau+d)^{-k} f_{\lambda}\left(\frac{a\tau+b}{c\tau+d}\right).$$

We identify $\scrM_k(\calN,\psi,\theta)$ with the set of functions $f: G_\mathbb A\rightarrow \C$ satisfying
\begin{enumerate}
\item $f(\alpha x w)=\psi_Y(w^{\iota})f(x)$ for all $\alpha\in G_K, x\in G_\mathbb A, w\in W(\calN), w_{\infty}=1$
\item For each $\lambda$ there exists an element $f_{\lambda}\in M_k$ such that $$f(x_{\lambda}^{-\iota}y)=\dete(y)^{im}(f_{\lambda}\mid y)(\textbf{i})$$ for all $y\in G_{\infty +}$.
\end{enumerate}  
 
Let $\psi_{\infty}: K_\mathbb A^\times\rightarrow \C^\times$ be defined by $\psi_{\infty}(\tilde{a})=\mbox{sgn}(\tilde{a}_{\infty})^k|\tilde{a}_{\infty}|^{2im}$, where $m$ was specified in the definition of $\theta$. We say that a Hecke character $\Psi$ extends $\psi\psi_{\infty}$ if $\Psi(\tilde{a})=\psi(\tilde{a}_{\calN}\mbox{ mod }\calN)\psi_{\infty}(\tilde{a})$ for all $\tilde{a}\in K_{\infty}^\times \times \prod_{\frakp} \calO_{\frakp}^\times$. If the previous equality holds for $\psi_\infty(\tilde{a})=\mbox{sgn}(\tilde{a}_\infty)^k$ then we say that $\Psi$ extends $\psi$.

Given a Hecke character $\Psi$ extending $\psi\psi_{\infty}$ we define an ideal character $\Psi^*$ modulo $\calN\mathfrak{P}_{\infty}$ by 
\begin{displaymath}
\left\{ \begin{array}{ll}
\Psi^*(\frakp)=\Psi(\tilde{\pi}_{\frakp}) & \textrm{for $\frakp\nmid \calN$ and $\tilde{\pi}\calO=\frakp,$}\\
\Psi^*(\mathfrak{a})=0 & \textrm{if $(\mathfrak{a},\calN)\neq 1$ }\\
\end{array} \right.
\end{displaymath}

For $\tilde{s}\in K_\mathbb A^\times$, define $f^{\tilde{s}}(x)=f(\tilde{s}x)$. The map $\tilde{s}\longrightarrow \left( f\mapsto f^{\tilde{s}}\right)$ defines a unitary representation of $K_\mathbb A^\times$ in $\scrM_k(\calN,\psi,\theta)$. By Schur's Lemma the irreducible subrepresentations are all one-dimensional (since $K_\mathbb A^\times$ is abelian). For a character $\Psi$ on $K_\mathbb A^\times$, let $\mathscr{M}_k(\calN,\Psi)$ denote the subspace of $\scrM_k(\calN,\psi,\theta)$ consisting of all $f$ for which $f^{\tilde{s}}=\Psi(\tilde{s})f$ and let $\mathscr{S}_k(\calN,\Psi)\subset \mathscr{M}_k(\calN,\Psi)$ denote the subspace of cusp forms. If $s\in K^\times$ then $f^{s}=f$. It follows that $\mathscr{M}_k(\calN,\Psi)$ is nonempty only when $\Psi$ is a Hecke character. We now have a decomposition $$\scrM_k(\calN,\psi,\theta)=\bigoplus_\Psi \mathscr{M}_k(\calN,\Psi),$$ where the direct sum is taken over Hecke characters $\Psi$ which extend $\psi\psi_\infty$.

If $f=(f_1,...,f_h)\in \scrM_k(\calN,\psi,\theta)$, then each $f_{\lambda}$ has a Fourier expansion
$$f_{\lambda}(\tau)=a_{\lambda}(0)+\sum_{0\ll \xi\in\mathcal{I}_{\lambda}} a_{\lambda}(\xi) e^{2\pi i \mbox{tr} (\xi\tau)}.$$

If $\mathfrak{m}$ is an integral ideal then we define the $\mathfrak{m}$-th `Fourier' coefficient of $f$ by 
\begin{displaymath}
C(\mathfrak{m},f)=\left\{ \begin{array}{ll}
N(\mathfrak{m})^{\frac{k_0}{2}}a_{\lambda}(\xi)\xi^{-\frac{k}{2}-im}& \textrm{if $\mathfrak{m}=\xi\mathcal{I}_{\lambda}^{-1}\subset\calO$}\\
0 & \textrm{otherwise}\\
\end{array} \right.
\end{displaymath}
where $k_0=\mbox{max}\{k_1,...,k_n\}$.

Given $f\in\scrM_k(\calN,\psi,\theta)$ and $y\in G_\mathbb A$ define a slash operator by setting $(f\mid y)(x)=f(xy^{\iota})$. 

For an integral ideal $\mathfrak{r}$ define the shift operator $B_{\mathfrak{r}}$ by $$f\mid B_{\mathfrak{r}}=N(\mathfrak{r})^{-\frac{k_0}{2}} f\mid \left(
  \begin{array}{ c c }
1& 0 \\
      0&\tilde{\mathfrak{r}}^{-1}
  \end{array} \right).$$
  The shift operator maps $\mathscr{M}_k(\calN,\Psi)$ to $\mathscr{M}_k(\mathfrak{r}\calN,\Psi)$. Further, $C(\mathfrak{m},f\mid B_{\mathfrak{r}})=C(\mathfrak{m}\mathfrak{r}^{-1},f)$. It is clear that $f \mid B_{\mathfrak{r}_1}\mid B_{\mathfrak{r}_2}=f\mid B_{\mathfrak{r}_1\mathfrak{r}_2}$.
  
  For an integral ideal $\mathfrak{r}$ the Hecke operator $T_{\mathfrak{r}}=T_{\mathfrak{r}}^{\calN}$ maps  $\mathscr{M}_k(\calN,\Psi)$ to itself regardless of whether or not $(\mathfrak{r},\calN)=1$. This action is given on Fourier coefficients by \begin{equation}\label{equation:heckeformula}C(\mathfrak{m},f\mid T_{\mathfrak{r}})=\sum_{\mathfrak{m}+\mathfrak{r}\subset\mathfrak{a}} \Psi^*(\mathfrak{a})N(\mathfrak{a})^{k_0-1}C(\mathfrak{a}^{-2}\mathfrak{m}\mathfrak{r},f).\end{equation} Note that if $(\mathfrak{a},\mathfrak{r})=1$ then $B_{\mathfrak{a}}T_{\mathfrak{r}}=T_{\mathfrak{r}}B_{\mathfrak{a}}$.

\section{A newform theory for Hilbert Eisenstein series}\label{section:newformtheory}

In this section we give a very brief outline of the newform theory of spaces of Hilbert Eisenstein series. This theory will play a pivotal role in the proof of Theorem \ref{theorem:main}. A detailed treatment is provided in \cite{Linowitz-Atwill}.

Fix a space $\scrM_k(\calN, \Psi)\subset \scrM_k(\calN,\psi)$ where $\psi: (\calO/\calN)^\times\rightarrow\mathbb{C}^\times$ is a numerical character, $\Psi$ is a Hecke character extending $\psi$ and $k\in  \Z^n$. Let $\scrE_k(\calN, \Psi)$ be the orthogonal complement of $\scrS_k(\calN,\Psi)$ in $ \scrM_k(\calN, \Psi)$ with respect to the Petersson inner product (i.e.  $\scrE_k(\calN, \Psi)$ is the subspace of Eisenstein series). It is well known that $ \scrM_k(\calN, \Psi)= \scrS_k(\calN, \Psi)$ unless $k_1=\cdots = k_n\geq 0$. Thus we may abuse notation and make the identification $k=(k,...,k)$ for some integer $k$. We will assume throughout that $k\geq 3$.

We begin with a construction due to Shimura (\cite[Prop. 3.4]{shimura-duke})

\begin{proposition}\label{proposition:existence}
Let $\calN_1, \calN_2$ be integral ideals and $\psi_1$ (respectively $\psi_2$) be a character, not necessarily primitive, on the strict ideal class group modulo $\calN_1$ (respectively $\calN_2$) such that
\begin{align*}
\psi_1(\nu\mathcal{O}) = \mbox{sgn}(\nu)^a\qquad \mbox{for } \nu\equiv 1\pmod{\calN_1}\\ 
 \psi_2(\nu\mathcal{O}) = \mbox{sgn}(\nu)^b\qquad \mbox{for } \nu\equiv 1\pmod{\calN_2},
\end{align*}
where  $a,b\in \Z^n$ and $a+b\equiv k\pmod{2\Z^n}$. Then there exists $E_{\psi_1,\psi_2}\in\scrE_k(\calN_1\calN_2,\Psi)$, where $\Psi$ is the Hecke character such that $\Psi^*(\mathfrak r)=(\psi_1\psi_2)(\mathfrak r)$ for $\mathfrak r$ coprime to $\calN_1\calN_2$,  such that for any integral ideal $\mathfrak{m}$, 

\begin{equation}\label{equation:coeff}
C(\mathfrak{m}, E_{\psi_1,\psi_2})=\sum_{\mathfrak{r}\mid \mathfrak{m}} \psi_1(\mathfrak{m}\mathfrak{r}^{-1})\psi_2(\mathfrak{r}) N(\mathfrak{r})^{k-1}.
\end{equation}
\end{proposition}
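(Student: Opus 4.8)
The plan is to realize $E_{\psi_1,\psi_2}$ as an adelic Eisenstein series attached to the pair $(\psi_1,\psi_2)$ and to read off its Fourier coefficients from the classical Fourier expansion of its components. First I would promote $\psi_1,\psi_2$ to idele class (Hecke) characters: the sign conditions fix the archimedean types via the exponents $a,b$, and the congruence $a+b\equiv k\pmod{2\Z^n}$ guarantees that the product $\psi_1\psi_2$ has the parity compatible with holomorphic weight $k$. This is precisely what is needed so that the resulting form transforms under $W(\calN_1\calN_2)$ with nebentypus $\psi=\psi_1\psi_2$, lies in the $\Psi$-eigenspace for the $K_\mathbb{A}^\times$-action (so that $f^{\tilde s}=\Psi(\tilde s)f$), and has $\Psi^*(\mathfrak{r})=(\psi_1\psi_2)(\mathfrak{r})$ for $\mathfrak{r}$ coprime to $\calN_1\calN_2$, identifying the level and Hecke character claimed in the statement.

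Next I would write the form down componentwise. On the $\lambda$-th component, indexed by the strict class representative $\mathcal{I}_\lambda$, I would take the classical Hecke--Eisenstein series of weight $k$ obtained by summing $\psi_1(\cdot)\psi_2(\cdot)(c\tau+d)^{-k}$ over a set of representatives $(c,d)$ for $\Gamma_\infty\backslash\Gamma_0(\calN_1\calN_2,\mathcal{I}_\lambda)$. Because $k\geq 3$, this series converges absolutely and locally uniformly, so no auxiliary complex parameter (Hecke's analytic continuation trick) is needed and all rearrangements below are legitimate.

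The heart of the argument is the Fourier expansion. I would split the summation into the terms with $c=0$, which produce the constant term and thereby witness that $E_{\psi_1,\psi_2}$ is non-cuspidal and hence lies in $\scrE_k(\calN_1\calN_2,\Psi)$, and the terms with $c\neq 0$. To the inner sum over $d$ I would apply the Lipschitz--Hecke summation formula, in its totally real form obtained by taking the product over the $n$ archimedean places of
\[
\sum_{d\in\Z}\frac{1}{(\tau+d)^{k}}=\frac{(-2\pi i)^{k}}{(k-1)!}\sum_{m\geq 1}m^{k-1}e^{2\pi i m\tau},
\]
which converts each inner sum into a sum over totally positive $\xi$. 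Collecting the contributions of a fixed $\xi$, hence of a fixed integral ideal $\mathfrak{m}=\xi\mathcal{I}_\lambda^{-1}$, and reindexing the double sum over $c$ and $\xi$ by the divisors $\mathfrak{r}\mid\mathfrak{m}$ (with one factor contributing $\psi_1(\mathfrak{m}\mathfrak{r}^{-1})$ and the other $\psi_2(\mathfrak{r})N(\mathfrak{r})^{k-1}$) turns the coefficient of $e^{2\pi i\,\mbox{tr}(\xi\tau)}$ into the divisor sum asserted in \eqref{equation:coeff}.

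I expect the main obstacle to be the precise matching of normalizations rather than any conceptual difficulty. The definition of $C(\mathfrak{m},f)$ carries both the factor $N(\mathfrak{m})^{k_0/2}$ and the archimedean twist $\xi^{-k/2-im}$, and these must cancel exactly against the archimedean constants $(-2\pi i)^{k}/(k-1)!$ and the powers of $\xi$ produced by the Lipschitz formula, so that the final coefficient is the clean arithmetic quantity $\sum_{\mathfrak{r}\mid\mathfrak{m}}\psi_1(\mathfrak{m}\mathfrak{r}^{-1})\psi_2(\mathfrak{r})N(\mathfrak{r})^{k-1}$ with leading constant $1$ and no residual dependence on the place or on the chosen representative $\mathcal{I}_\lambda$. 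Verifying this uniformity across all $h$ components---so that a single divisor-sum formula holds for every integral ideal $\mathfrak{m}$---together with checking that $\psi_1,\psi_2$ are genuinely well defined on the strict ray class groups under the stated sign conditions, is where the bulk of the bookkeeping lies; the full details are carried out in \cite{shimura-duke, Linowitz-Atwill}.
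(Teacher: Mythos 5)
The paper does not actually prove this proposition; it is quoted from Shimura (\cite[Prop.~3.4]{shimura-duke}), and your sketch is a reconstruction of the classical construction underlying that citation (a lattice-sum Eisenstein series, absolute convergence for $k\geq 3$, Hilbert--Lipschitz summation, divisor-sum reindexing). So the route is the right one. However, one step of your argument is genuinely invalid as written: you claim the $c=0$ terms ``produce the constant term and thereby witness that $E_{\psi_1,\psi_2}$ is non-cuspidal and hence lies in $\scrE_k(\calN_1\calN_2,\Psi)$.'' This fails twice. First, when $\psi_1$ is nontrivial the $c=0$ contribution to the expansion you are computing vanishes (in the lattice-sum normalization it carries a factor $\psi_1(0)=0$), so it witnesses nothing; the constant term of $E_{\psi_1,\psi_2}$ is nonzero only at \emph{some} cusp, not necessarily the one you expand at. Second, and more importantly, the paper defines $\scrE_k(\calN_1\calN_2,\Psi)$ as the \emph{orthogonal complement} of $\scrS_k(\calN_1\calN_2,\Psi)$ with respect to the Petersson inner product, and ``non-cuspidal'' does not imply ``orthogonal to all cusp forms'': a non-cuspidal form is in general a nonzero cusp form plus an Eisenstein component. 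To place your series in $\scrE_k$ you must prove orthogonality to every cusp form, which for $k\geq 3$ follows from the standard unfolding argument (the Petersson integral of a cusp form against the Eisenstein series unfolds to a zeroth Fourier coefficient of the cusp form, which vanishes). That step is absent from your proposal.

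A secondary discrepancy: summing $\psi_1(\cdot)\psi_2(\cdot)(c\tau+d)^{-k}$ over coset representatives of $\Gamma_\infty\backslash\Gamma_0(\calN_1\calN_2,\mathcal{I}_\lambda)$ does not yield the clean twisted divisor sums of (\ref{equation:coeff}); those arise from Hecke's sum over \emph{all} pairs $(c,d)$ in a suitable pair of $\calO$-modules modulo the unit action, with $\psi_1,\psi_2$ evaluated on the ideals attached to $c$ and $d$. The coset (``coprime pair'') sum differs from the full lattice sum by an $L$-series factor coming from the content of the pair, and in the Hilbert setting the Poisson/Lipschitz step runs over a fractional-ideal lattice whose dual involves $\frakd^{-1}$, not a product of copies of $\Z$; this is precisely where the representatives $\mathcal{I}_\lambda$ and the different enter the bookkeeping. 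Relatedly, you do not need the archimedean constants to ``cancel exactly'': the proposition only asserts existence of a form with coefficients (\ref{equation:coeff}), so one simply rescales by the resulting constant (and $L$-value). With the orthogonality step supplied and the sum taken over lattice pairs rather than cosets, your outline becomes the standard proof.
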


It is easy to see that, in analogy with the cuspidal case, if $\frakp$ is a prime not dividing $\calN_1\calN_2$ then $E_{\psi_1\psi_2}$ is an eigenform of the Hecke operator $T_\frakp$ with eigenvalue $C(\frakp,E_{\psi_1,\psi_2})$.

We say that a form $E_{\psi_1,\psi_2}$ is an \textit{Eisenstein newform} of level $\calN$ if the characters $\psi_1$ and $\psi_2$ are both primitive and $\calN=\mathfrak f_{\psi_1}\mathfrak{f}_{\psi_2}$. We denote by $\scrE^{new}_k(\calN,\Psi)$ the subspace of $\scrE_k(\calN,\Psi)$ generated by newforms of level $\calN$.

As in the cuspidal case, $\scrE_k(\calN,\Psi)$ has a basis consisting of shifts of newforms of level $\calM$ dividing $\calN$ (\cite[Prop. 1.5]{wiles}, \cite[Prop. 3.11]{Linowitz-Atwill}).

\begin{proposition}\label{proposition:decomp}
Let notation be as above. We have the following decomposition of $\scrE_k(\calN,\Psi)$:$$\scrE_k(\calN,\Psi)=\bigoplus_{\mathfrak{f}_\Psi\mid \mathcal{M}\mid \calN}\bigoplus_{\qquad\mathfrak{r}\mid \calN\mathcal{M}^{-1}} \scrE^{(new)}_k(\mathcal{M},\Psi)\mid B_{\mathfrak{r}}.$$
\end{proposition}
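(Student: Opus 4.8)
The plan is to prove the decomposition in three stages: first that the right-hand side is contained in $\scrE_k(\calN,\Psi)$, then that a natural spanning set for $\scrE_k(\calN,\Psi)$ is built from shifts of newforms, and finally that the resulting double sum is direct. Throughout I would exploit Proposition \ref{proposition:existence}, which furnishes the family of Eisenstein series $E_{\psi_1,\psi_2}$ whose Fourier coefficients are given explicitly by \eqref{equation:coeff}, together with the shift relation $C(\frakm, f\mid B_{\mathfrak{r}})=C(\frakm\mathfrak{r}^{-1},f)$.

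For the containment of the right-hand side, fix a newform $E_{\psi_1,\psi_2}\in\scrE^{new}_k(\calM,\Psi)$, so that $\psi_1,\psi_2$ are primitive and $\calM=\mathfrak{f}_{\psi_1}\mathfrak{f}_{\psi_2}$. Since $B_{\mathfrak{r}}$ sends $\scrM_k(\calM,\Psi)$ into $\scrM_k(\mathfrak{r}\calM,\Psi)$ and preserves the Hecke character $\Psi$, and since $\mathfrak{r}\mid\calN\calM^{-1}$ forces $\mathfrak{r}\calM\mid\calN$, the shift $E_{\psi_1,\psi_2}\mid B_{\mathfrak{r}}$ lies in $\scrM_k(\calN,\Psi)$; that it remains orthogonal to the cusp forms, hence lies in $\scrE_k(\calN,\Psi)$, follows because the shift operators carry Eisenstein series to Eisenstein series. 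This yields the inclusion of the entire right-hand side.

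The heart of the argument is the reverse inclusion. By Shimura's construction the forms $E_{\psi_1,\psi_2}$ with $\mathfrak{f}_{\psi_1}\mathfrak{f}_{\psi_2}\mid\calN$ and $\psi_1\psi_2$ inducing $\Psi^*$ span $\scrE_k(\calN,\Psi)$, but for general $\psi_1,\psi_2$ these characters need not be primitive. I would therefore prove the key lemma that each such $E_{\psi_1,\psi_2}$ is a $\C$-linear combination of shifts $E_{\psi_1^\#,\psi_2^\#}\mid B_{\mathfrak{r}}$, where $\psi_i^\#$ is the primitive character inducing $\psi_i$ and $\mathfrak{r}$ ranges over divisors of $\calN(\mathfrak{f}_{\psi_1^\#}\mathfrak{f}_{\psi_2^\#})^{-1}$. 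This is a purely arithmetic identity between the multiplicative function $\frakm\mapsto C(\frakm,E_{\psi_1,\psi_2})$ and shifts of $\frakm\mapsto C(\frakm,E_{\psi_1^\#,\psi_2^\#})$; comparing the associated Dirichlet series factor-by-factor reduces it to a local computation at each $\frakp$ dividing $\calN\calM^{-1}$, carried out by M\"obius inversion over ideals. This local bookkeeping at the ramified primes is the main obstacle, since the Euler factors of the imprimitive series differ from the primitive ones exactly at the primes dividing the conductors.

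Finally, to see the sum is direct I would separate blocks by Hecke eigenvalues. For $\frakp\nmid\calN$ the eigenvalue of $T_{\frakp}$ on $E_{\psi_1^\#,\psi_2^\#}$ is $C(\frakp,E_{\psi_1^\#,\psi_2^\#})=\psi_1^\#(\frakp)+\psi_2^\#(\frakp)N(\frakp)^{k-1}$, and by linear independence of characters distinct ordered pairs $(\psi_1^\#,\psi_2^\#)$ yield distinct eigensystems; since the shifts $B_{\mathfrak{r}}$ commute with $T_{\frakp}$ for $\frakp\nmid\calN$ and preserve these eigenvalues, the eigensystem recovers both $\calM$ and the newform, so the spaces $\scrE^{new}_k(\calM,\Psi)\mid B_{\mathfrak{r}}$ attached to different newforms are independent. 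Within the contributions of a single newform $E$, the shifts $\{E\mid B_{\mathfrak{r}}\}$ are linearly independent because $C(\frakm,E\mid B_{\mathfrak{r}})=C(\frakm\mathfrak{r}^{-1},E)$ distinguishes distinct $\mathfrak{r}$ through the support and leading coefficients of the shifted sequences. Combining the three stages, or equivalently matching dimensions once independence is known, yields the asserted direct sum decomposition.
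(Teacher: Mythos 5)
The paper itself does not prove Proposition \ref{proposition:decomp}; it quotes it from \cite[Prop. 1.5]{wiles} and \cite[Prop. 3.11]{Linowitz-Atwill}, so your proposal has to be measured against the arguments given there. Your skeleton --- (i) shifts of newforms of level $\calM\mid\calN$ land in $\scrE_k(\calN,\Psi)$, (ii) every imprimitive $E_{\psi_1,\psi_2}$ is a finite linear combination of shifts $E_{\psi_1^\#,\psi_2^\#}\mid B_{\mathfrak{r}}$, and (iii) directness via Hecke eigensystems together with the support argument for shifts of a fixed newform --- has the right shape. Step (ii) is correct as you describe it: the Euler factors of the imprimitive Dirichlet series differ from the primitive ones by a finite Dirichlet polynomial, which translates into a linear combination of shifts, and a form of weight $k\geq 3$ with all coefficients $C(\frakm,\cdot)$ equal to zero vanishes. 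In step (iii), however, you should invoke Theorem \ref{theorem:eigenvalues} (or reprove it) rather than appeal to ``linear independence of characters'': the eigenvalues $\psi_1^\#(\frakp)+\psi_2^\#(\frakp)N(\frakp)^{k-1}$ contain the non-character factor $N(\frakp)^{k-1}$, so separating pairs requires fixing a strict ray class and letting the norm vary within it; this is a repairable imprecision, not a gap.

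The genuine gap is the sentence ``By Shimura's construction the forms $E_{\psi_1,\psi_2}$ with $\mathfrak{f}_{\psi_1}\mathfrak{f}_{\psi_2}\mid\calN$ and $\psi_1\psi_2$ inducing $\Psi^*$ span $\scrE_k(\calN,\Psi)$.'' Proposition \ref{proposition:existence} is an existence statement only: it produces Eisenstein series with prescribed coefficients, and nothing in the paper or in your argument shows that these exhaust $\scrE_k(\calN,\Psi)$. That spanning claim is the substantive content of the proposition; once it is granted, the rest is bookkeeping, so assuming it is close to assuming what is to be proved. In the cited proofs this input comes from outside the coefficient formulas: for parallel weight $k\geq 3$ one identifies the dimension of $\scrE_k(\calN,\Psi)$ with a count of cusps of the groups $\Gamma_\lambda$ (equivalently, one computes the constant terms of Eisenstein series at all cusps and shows the resulting map is an isomorphism onto its natural target), and then matches that count against the number of linearly independent shifted newforms. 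Without some such geometric or analytic ingredient, your three stages establish only that the right-hand side is a direct sum \emph{contained in} $\scrE_k(\calN,\Psi)$, not the asserted equality.
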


The eigenvalues of an Eisenstein newform with respect to the Hecke operators $\{ T_\frakp : \frakp\nmid \calN\}$ distinguish it from other newforms. In particular we have the following strong multiplicity-one theorem (Theorem 3.6 of \cite{Linowitz-Atwill}).

 \begin{theorem}\label{theorem:eigenvalues}
Let $\calM,\calN$ be integral ideals and $E_{\psi_1,\psi_2}\in \scrE^{(new)}_k(\calM,\Psi)$ and $E_{\phi_1,\phi_2} \in \scrE_k^{(new)}(\calN,\Psi)$ be newforms such that $$C(\frakp,E_{\psi_1,\psi_2})=C(\frakp,E_{\phi_1,\phi_2})$$ for 
a set of finite primes of $K$ having Dirichlet density strictly greater than $\frac{1}{2}$. Then $\calM=\calN$ and $E_{\psi_1,\psi_2}=E_{\phi_1,\phi_2}$.
\end{theorem}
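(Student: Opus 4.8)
The plan is to argue entirely at the level of Fourier coefficients at primes, using the explicit formula \eqref{equation:coeff} of Proposition \ref{proposition:existence}. First I would specialize that formula to a prime $\frakp\nmid\calM\calN$: the only divisors of $\frakp$ are $(1)$ and $\frakp$, so
$$C(\frakp,E_{\psi_1,\psi_2})=\psi_1(\frakp)+\psi_2(\frakp)N(\frakp)^{k-1},\qquad C(\frakp,E_{\phi_1,\phi_2})=\phi_1(\frakp)+\phi_2(\frakp)N(\frakp)^{k-1}.$$
Let $P$ be the hypothesized set of primes of density $>\frac12$. Discarding the finitely many members of $P$ dividing $\calM\calN$ (which changes no density), the hypothesis $C(\frakp,E_{\psi_1,\psi_2})=C(\frakp,E_{\phi_1,\phi_2})$ rearranges, for every remaining $\frakp\in P$, to
$$\bigl(\psi_2(\frakp)-\phi_2(\frakp)\bigr)N(\frakp)^{k-1}=\phi_1(\frakp)-\psi_1(\frakp).$$

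The first key step is a separation-of-growth argument. Since $\psi_1,\psi_2,\phi_1,\phi_2$ have finite order, each value $\psi_i(\frakp),\phi_i(\frakp)$ is a root of unity of bounded order; in particular the differences $\psi_2(\frakp)-\phi_2(\frakp)$ lie in a fixed finite subset of $\C$, so there is a constant $c>0$ with $|\psi_2(\frakp)-\phi_2(\frakp)|\geq c$ whenever this difference is nonzero. The right-hand side above is bounded by $2$, whereas $k\geq 3$ forces the left-hand side to have absolute value $\geq c\,N(\frakp)^{k-1}$ as soon as $\psi_2(\frakp)\neq\phi_2(\frakp)$. Hence for every $\frakp\in P$ with $N(\frakp)$ beyond an explicit bound we must have $\psi_2(\frakp)=\phi_2(\frakp)$, and then also $\psi_1(\frakp)=\phi_1(\frakp)$. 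Removing these finitely many small primes leaves a subset $P'\subseteq P$, still of density $>\frac12$, on which $\psi_1=\phi_1$ and $\psi_2=\phi_2$ hold simultaneously. Note that the growth term forces the pairing $\psi_i\leftrightarrow\phi_i$ automatically, so no separate bookkeeping of which character matches which is needed.

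The second, and crucial, step upgrades agreement on $P'$ to an identity of characters, and this is exactly where the threshold $\frac12$ enters. Consider the character $\chi=\psi_1\phi_1^{-1}$ of the strict ray class group: the set $\{\frakp:\chi(\frakp)=1\}$ contains $P'$ and so has density $>\frac12$. If $\chi$ were nontrivial, of order $d\geq 2$, then Chebotarev's theorem applied to the ray class field cut out by $\chi$ equidistributes the primes among the cosets of $\ker\chi$, giving $\{\frakp:\chi(\frakp)=1\}$ density exactly $1/d\leq\frac12$, a contradiction. Thus $\chi$ is trivial and $\psi_1=\phi_1$, and the identical argument applied to $\psi_2\phi_2^{-1}$ gives $\psi_2=\phi_2$. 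I expect this density computation to be the main obstacle, as it is what pins down the $\frac12$ bound and is the step combining the finiteness of the characters with equidistribution.

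Finally, with $\psi_1=\phi_1$ and $\psi_2=\phi_2$ as primitive characters, their conductors coincide, so the newform normalization yields $\calM=\mathfrak f_{\psi_1}\mathfrak f_{\psi_2}=\mathfrak f_{\phi_1}\mathfrak f_{\phi_2}=\calN$; and substituting the now-equal characters into \eqref{equation:coeff} gives $C(\frakm,E_{\psi_1,\psi_2})=C(\frakm,E_{\phi_1,\phi_2})$ for every integral ideal $\frakm$, whence $E_{\psi_1,\psi_2}=E_{\phi_1,\phi_2}$ since the Fourier coefficients determine the form.
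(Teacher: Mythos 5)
Your proof is correct. A note on context: the paper never proves this theorem internally --- it is quoted (as Theorem 3.6) from the Atwill--Linowitz preprint \cite{Linowitz-Atwill}, so there is no in-paper proof to compare against; I can only assess your argument on its own terms, and it stands as the natural argument one would expect that preprint to contain. All three of your steps are sound: the specialization of \eqref{equation:coeff} to primes $\frakp\nmid\calM\calN$; the growth separation (the values of the finite-order characters lie in a fixed finite set, so nonzero differences are bounded below by some $c>0$, while $|\phi_1(\frakp)-\psi_1(\frakp)|\le 2$, forcing $\psi_2(\frakp)=\phi_2(\frakp)$ and then $\psi_1(\frakp)=\phi_1(\frakp)$ for all but finitely many $\frakp$ in your set $P$); and the Chebotarev step, which correctly isolates where the hypothesis ``density strictly greater than $\tfrac12$'' is used --- a nontrivial strict ray class character $\chi$ of order $d$ has $\chi(\frakp)=1$ on a set of density exactly $1/d\le\tfrac12$, and this bound is sharp for quadratic $\chi$, so the theorem's threshold cannot be weakened. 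Two points are worth making explicit in a polished write-up: (i) triviality of $\chi=\psi_1\phi_1^{-1}$ as a character modulo $\frakf_{\psi_1}\frakf_{\phi_1}$ yields $\psi_1=\phi_1$ (and equality of conductors) only after invoking the uniqueness of the primitive character inducing a given ray class character; (ii) equality of all coefficients $C(\frakm,\cdot)$ forces equality of the forms because a Hilbert modular form of positive weight whose nonconstant Fourier coefficients all vanish has constant components, and a nonzero constant is not modular of positive weight, so the difference of the two newforms is zero.
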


\section{Proof of theorem}

Fix an integral ideal $\calN$ of $K$, a weight vector $(k,\dots, k)\in \Z^n$ with $k\geq 3$ and consider the space $\scrM_k(\calN)$ of adelic Hilbert modular forms of level $\calN$, weight $k$ and trivial character. We will characterize the Hilbert modular cusp forms of $\scrM_k(\calN)$ in terms of the size of their Fourier coefficients.

\begin{theorem}\label{theorem:main}
Fix a real number $\epsilon\in \left(0, \frac{5k-7}{10}\right]$. If the Fourier coefficients of $f\in\scrM_k(\calN)$ satisfy 
\begin{equation}\label{equation:bound}
|C(\frakm,f)| \ll_{f,\epsilon} N(\frakm)^{k-1-\epsilon},
\end{equation}
then $f$ is a cusp form.
\end{theorem}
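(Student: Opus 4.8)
The plan is to decompose $f$ into its cuspidal and Eisenstein parts and show that the hypothesized coefficient bound forces the Eisenstein part to vanish. First I would write $f = f_{\mathrm{cusp}} + f_{\mathrm{Eis}}$ with $f_{\mathrm{cusp}}\in\scrS_k(\calN,\Psi)$ and $f_{\mathrm{Eis}}\in\scrE_k(\calN,\Psi)$ for an appropriate Hecke character $\Psi$; since the decomposition $\scrM_k(\calN)=\bigoplus_\Psi \scrM_k(\calN,\Psi)$ is orthogonal and compatible with the Hecke action, it suffices to work inside a single $\scrM_k(\calN,\Psi)$. By the Ramanujan--Petersson bound (known in the Hilbert setting in this weight range), the Fourier coefficients of $f_{\mathrm{cusp}}$ satisfy $|C(\frakm,f_{\mathrm{cusp}})|\ll N(\frakm)^{(k-1)/2+\epsilon'}$, which is of strictly smaller order than the bound in~\eqref{equation:bound} once $\epsilon<\tfrac{k-1}{2}$. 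Consequently the coefficients of $f_{\mathrm{Eis}}$ inherit the bound $|C(\frakm,f_{\mathrm{Eis}})|\ll_{f,\epsilon} N(\frakm)^{k-1-\epsilon}$, and the problem reduces to showing that an Eisenstein series whose coefficients are this small must be zero.

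Next I would exploit the newform structure of $\scrE_k(\calN,\Psi)$. By Proposition~\ref{proposition:decomp}, $f_{\mathrm{Eis}}$ is a finite linear combination of shifts $E_{\psi_1,\psi_2}\mid B_{\frakr}$ of Eisenstein newforms of level $\calM$ dividing $\calN$. Using the relation $C(\frakm, f\mid B_{\frakr})=C(\frakm\frakr^{-1},f)$ together with the explicit coefficient formula~\eqref{equation:coeff}, the coefficients of $f_{\mathrm{Eis}}$ are expressed in terms of the divisor-type sums $\sum_{\frakr\mid\frakm}\psi_1(\frakm\frakr^{-1})\psi_2(\frakr)N(\frakr)^{k-1}$. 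To detect a single newform I would apply the strong multiplicity-one theorem (Theorem~\ref{theorem:eigenvalues}): the Hecke eigenvalues $C(\frakp,E_{\psi_1,\psi_2})$ at primes $\frakp\nmid\calN$ separate distinct newforms, so by choosing primes appropriately (or passing to Hecke eigencomponents) I can isolate the contribution of each newform and reduce to showing that a nonzero scalar multiple of a single $E_{\psi_1,\psi_2}$ cannot satisfy the bound.

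The crux is then a lower bound on the coefficients of an individual Eisenstein newform. I would evaluate $C(\frakm,E_{\psi_1,\psi_2})$ along a cleverly chosen sequence of ideals $\frakm$ forcing $N(\frakm)^{k-1-\epsilon}$ to be beaten by the true growth. The natural choice is to take $\frakm=\frakp^j$ for a prime $\frakp\nmid\calN$, where the coefficient becomes $\sum_{i=0}^{j}\psi_1(\frakp)^{j-i}\psi_2(\frakp)^i N(\frakp)^{i(k-1)}$, a geometric-type sum dominated by its top term $\psi_2(\frakp)^j N(\frakp)^{j(k-1)}$ of absolute value $N(\frakp^j)^{k-1}$. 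Comparing this against the permitted size $N(\frakp^j)^{k-1-\epsilon}$ yields a contradiction as $j\to\infty$, provided the top term does not suffer excessive cancellation against the lower-order terms. I expect the main obstacle to be precisely this: controlling the possible cancellation among the characters $\psi_1,\psi_2$ (which are roots of unity) so as to guarantee that $|C(\frakp^j,E_{\psi_1,\psi_2})|\gg N(\frakp^j)^{k-1-\delta}$ for some $\delta<\epsilon$, and in handling a genuine linear combination of several newforms and their shifts simultaneously rather than a single one. The constraint $\epsilon\le\frac{5k-7}{10}$ almost certainly encodes the quantitative balance between the cuspidal Ramanujan exponent $(k-1)/2$ and the Eisenstein exponent $k-1$ needed to make this separation argument go through.
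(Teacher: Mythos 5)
Your plan follows the same route as the paper---split off the cuspidal part via a coefficient bound for cusp forms, reduce by Hecke projection to a single $\Psi$-component of $\scrE_k(\calN)$, use the newform decomposition, and derive a contradiction from coefficient growth---but it has a genuine gap exactly at the step you yourself flag as ``the main obstacle,'' and that step is the actual content of the paper's proof. Strong multiplicity one (Theorem \ref{theorem:eigenvalues}) together with operators in $\T^{\calN}$ can only isolate the span of \emph{all} shifts $E_{\psi_1,\psi_2}\mid B_{\fraka}$, $\fraka\mid\calN\calM^{-1}$, of a fixed newform: since $B_{\fraka}$ and $T_{\frakp}$ commute for $\frakp\nmid\calN$, every shift is a $T_{\frakp}$-eigenform with the \emph{same} eigenvalue $C(\frakp,E_{\psi_1,\psi_2})$, so no Hecke projection prime to the level reduces you to ``a nonzero scalar multiple of a single $E_{\psi_1,\psi_2}$.'' Moreover, your crux computation at $\frakm=\frakp^{j}$ with $\frakp\nmid\calN$ detects only the unshifted component: each $\fraka\neq(1)$ dividing $\calN\calM^{-1}$ is composed of primes dividing $\calN$, so $C(\frakp^{j},E_{\psi_1,\psi_2}\mid B_{\fraka})=C(\frakp^{j}\fraka^{-1},E_{\psi_1,\psi_2})=0$, and your argument proves $\lambda_{(1)}=0$ and nothing more. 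The paper closes this gap by induction on the number $r$ of prime factors of $\fraka$: assuming $\lambda_{\frakc}=0$ whenever $\frakc$ has fewer than $r$ prime factors, it evaluates the coefficient of $f=\sum_{\fraka}\lambda_{\fraka}E_{\psi_1,\psi_2}\mid B_{\fraka}$ at $\frakm=\fraka\frakP$ with $\frakP$ a prime in the trivial strict ray class mod $\calN$ (so $\psi_1(\frakP)=\psi_2(\frakP)=1$); all shifts $\frakb\neq\fraka$ with at least $r$ prime factors contribute $0$ because $\fraka\frakP\frakb^{-1}$ is not integral, leaving $|\lambda_{\fraka}|\left(1+N(\frakP)^{k-1}\right)\ll_{\fraka} N(\frakP)^{k-1-\epsilon}$, whence $\lambda_{\fraka}=0$ as $N(\frakP)\to\infty$ (Chebotarev supplies infinitely many such $\frakP$). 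Your $\frakp^{j}$ idea could be repaired along the same lines (evaluate at $\fraka\frakP$, or $\fraka\frakp^{j}$), but as written the proposal does not contain this argument. Incidentally, the cancellation you worry about in the geometric sum is a non-issue: for $k\geq 3$ one has $N(\frakp)^{k-1}\geq 4$, so the top term of \eqref{equation:coeff} strictly dominates the sum of all the others.

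Two smaller discrepancies. First, the paper does not invoke the Ramanujan--Petersson bound for Hilbert cusp forms; it uses Shahidi's estimate $|C(\frakm,f)|\ll_{f}N(\frakm)^{\frac{k-1}{2}+\frac{1}{5}}$, and the hypothesis $\epsilon\in\left(0,\frac{5k-7}{10}\right]$ is exactly the condition $\frac{k-1}{2}+\frac{1}{5}\leq k-1-\epsilon$; so the constant you could not pin down is Shahidi's exponent $\frac{1}{5}$, not an artifact of the Eisenstein analysis, and relying instead on Ramanujan--Petersson (a much deeper input in the Hilbert setting) is both unnecessary and not what the constant reflects. Second, the passage to a single $\Psi$-component should be justified as in the paper: one applies $T\in\T^{\calN}$ acting as a nonzero scalar on one summand and annihilating the others, noting that the bound \eqref{equation:bound} is preserved under $T$ by the coefficient formula \eqref{equation:heckeformula}.
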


\begin{proof}
The space $\scrM_k(\calN)$ decomposes into a direct sum $$\scrM_k(\calN)=\scrS_k(\calN)\oplus\scrE_k(\calN),$$ where $\scrE_k(\calN)$ is the subspace of Eisenstein series. Shahidi \cite{Shahidi} has shown that the Fourier coefficients of cusp forms satisfy $|C(\frakm,f)|\ll_f N(\frakm)^{\frac{k-1}{2}+\frac{1}{5}}$ and hence satisfy (\ref{equation:bound}) as well. Thus it suffices to show that if $f\in\scrE_k(\calN)$ satisfies (\ref{equation:bound}) then $f=0$. We have a decomposition 
\begin{equation}\label{equation:heckedecomp}
\scrE_k(\calN)=\bigoplus_{\Psi} \scrE_k(\calN,\Psi),
\end{equation}

where the sum is taken over Hecke characters extending the trivial character $$\psi_0: \left(\calO/\calN\right)^\times\rightarrow \C^\times.$$

Let $\T^\calN$ denote the algebra generated by all Hecke operators $T_\frakm$ with $\frakm$ an integral ideal coprime to $\calN$.
It is  well-known that for each direct summand in (\ref{equation:heckedecomp}) we can find an operator $T\in\T^\calN$ such that $T$ acts on this summand by multiplication by a non-zero scalar and annihilates all of the other summands.

Observing that if $f\in \scrE_k(\calN)$ satisfies (\ref{equation:bound}) then so too does $f\mid T$ for all $T\in\T^\calN$ (as follows from the action of $T_\frakm$ on Fourier coefficients; see (\ref{equation:heckeformula})), we see that it suffices to fix a direct summand $\scrE_k(\calN,\Psi)$ of (\ref{equation:heckedecomp}) and prove that if $f\in\scrE_k(\calN,\Psi)$ and $f$ satisfies (\ref{equation:bound}) then $f=0$.

The newform theory of Hilbert Eisenstein series \cite{Linowitz-Atwill} shows that we have
\begin{equation}\scrE_k(\calN,\Psi)=\bigoplus_{\substack{(\psi_1,\psi_2)\\ \psi_1\psi_2=\Psi^*}} \bigoplus_{\fraka\mid \calN\frakf_{\psi_1}^{-1}\frakf_{\psi_2}^{-1} } \C E_{\psi_1,\psi_2}\mid B_\fraka, \end{equation}
where the first sum is over pairs $(\psi_1,\psi_2)$ of primitive characters on the strict class group of $K$ having the property that $\Psi^*(\mathfrak r)=\psi_1(\mathfrak r)\psi_2(\mathfrak r)$ for all integral ideals $\mathfrak r$ coprime to $\frakf_{\psi_1}\frakf_{\psi_2}$. 
	
We therefore fix an Eisenstein newform $E_{\psi_1,\psi_2}$ of level $\calM$ dividing $\calN$ which satisfies (\ref{equation:bound}) and write \begin{equation}\label{equation:fdecomp}f=\sum_{\frak a\mid\calN\calM^{-1}} \lambda_\fraka E_{\psi_1,\psi_2}\mid B_\fraka,\end{equation} where the coefficients $\lambda_\fraka$ are all complex numbers. 

We show that each of the coefficients $\lambda_\fraka$ is equal to zero. Our argument is by induction. For each $r\geq 0$ we show that if a divisor $\fraka$ of $\calN\calM^{-1}$ has $r$ prime factors (with multiplicity) then $\lambda_\fraka=0$.  

Suppose first that $r=0$ so that $\fraka=(1)$. Let $\frakP$ be a prime ideal which represents the trivial element of the strict ideal class group modulo $\calN$. It is clear that $\frakP\nmid \calN$. Then $\psi_1(\frakP)=\psi_2(\frakP)=1$. Note as well that if $\frakb$ is a divisor of $\calN\calM^{-1}$ with at least one prime divisor then $$C(\frakP, E_{\psi_1,\psi_2}\mid B_{\frakb})=C(\frakP\frakb^{-1},E_{\psi_1,\psi_2})=0$$ since $\frakP\frakb^{-1}$ is not integral. Computing the $\frakP$-th coefficients in (\ref{equation:fdecomp}) now yields:
$$|\lambda_{(1)}|(1+N(\frakP)^{k-1})\ll N(\frakP)^{k-1-\epsilon}.$$ The Chebotarev Density theorem implies that there are infinitely many prime ideals $\frakP$ which represent the trivial strict ideal class modulo $\calN$, hence as $N(\frakP)\rightarrow \infty$ we obtain a contradiction unless $\lambda_{(1)}=0$.

We now suppose that $r\geq 1$ and that we have shown that $\lambda_{\frakc}=0$ for any divisor $\frakc$ of $\calN\calM^{-1}$ having fewer than $r$ prime factors. Let $\fraka$ be a divisor of $\calN\calM^{-1}$ having exactly $r$ prime factors (not necessarily distinct) and write $\fraka=\frakP_1\dots\frakP_r$ as a product of primes. Let $\frakP$ be a prime ideal which represents the trivial element of the strict ideal class group modulo $\calN$ and set $\frakm=\fraka\frakP=\frakP_1\dots\frakP_r\frakP$. As above, if $\frakb\neq \fraka$ is a divisor of $\calN\calM^{-1}$ with at least $r$ prime factors then we have $$C(\frakm, E_{\psi_1,\psi_2}\mid B_{\frakb})=C(\frakm\frakb^{-1},E_{\psi_1,\psi_2})=0.$$ Computing the $\frakm$-th coefficients in (\ref{equation:fdecomp}) now yields:

$$|\lambda_\fraka|(1+N(\frakP)^{k-1})\ll N(\frakP)^{k-1-\epsilon}.$$ As $\frakP$ was selected from an infinite set of primes (a set which in fact has positive Dirichlet density), by letting $N(\frakP)\rightarrow \infty$ we obtain a contradiction unless $\lambda_\fraka=0$.

\end{proof}

% ------------------------------------------- Begin References
% -------------------------------------------

\end{document}